\documentclass[11pt]{article}

\usepackage[utf8]{inputenc}
\usepackage[british]{babel}
\usepackage[a4paper, margin=2.5cm]{geometry}
\usepackage{amsmath, amsthm, amssymb, amsfonts}
\usepackage{mathtools}
\usepackage{thmtools}
\usepackage[shortlabels, inline]{enumitem}
\usepackage[font={small, it}]{caption}
\usepackage{microtype}
\usepackage{xcolor}
\usepackage{mathabx}
\usepackage{hyperref}
\usepackage{bm}



\newcommand{\eps}{\varepsilon}
\renewcommand{\phi}{\varphi}
\renewcommand{\rho}{\varrho}


\DeclareMathOperator*{\N}{\mathbb{N}}

\let\setminus=\smallsetminus

\newcommand{\Gnp}{G_{n, p}}






\declaretheorem[parent=section]{theorem}
\declaretheorem[sibling=theorem]{lemma}

\declaretheorem[sibling=theorem]{claim}


\setlength{\parindent}{0em}
\setlength{\parskip}{0.5em}
\setlist{itemsep=0.1em, topsep=0.1em, parsep=0.1em, partopsep=0.1em}

\hypersetup{
  colorlinks,
  linkcolor={red!60!black},
  citecolor={green!50!black},
  urlcolor={blue!80!black}
}

\colorlet{RoyalRed}{red!70!black}
\definecolor{RoyalBlue}{rgb}{0.25, 0.41, 0.88}
\definecolor{RoyalAzure}{rgb}{0.0, 0.22, 0.66}

\newlength{\bibitemsep}\setlength{\bibitemsep}{0.5pt}
\newlength{\bibparskip}\setlength{\bibparskip}{0.5pt}
\let\oldthebibliography\thebibliography
\renewcommand\thebibliography[1]{%
  \oldthebibliography{#1}%
  \setlength{\parskip}{\bibitemsep}%
  \setlength{\itemsep}{\bibparskip}%
}

\newcounter{propcnt} 

\newlist{alphenum}{enumerate}{1}
\setlist[alphenum,1]{%
  label=\normalfont{\bfseries{(\Alph{propcnt}{\arabic*})}},
  ref=\normalfont{(\Alph{propcnt}{\arabic*})},
  leftmargin = \parindent+3em,
}

\title{On the size-Ramsey number of grids}
\author{
  David Conlon\thanks{Department of Mathematics, California Institute of
  Technology, Pasadena, CA 91125, USA. Email: \texttt{dconlon@caltech.edu}.
  Research supported by NSF Award DMS-2054452.}
  \and
  Rajko Nenadov\thanks{Google Z\"urich. Email: \texttt{rajkon@gmail.com}.}
  \and
  Milo\v{s} Truji\'{c}\thanks{Institute of Theoretical Computer Science, ETH
  Z\"{u}rich, 8092 Z\"{u}rich, Switzerland. Email: \texttt{mtrujic@inf.ethz.ch}.
  Research supported by grant no.\ 200020 197138 of the Swiss National Science
  Foundation.}
}
\date{}

\begin{document}
\maketitle

\begin{abstract}
  We show that the size-Ramsey number of the $\sqrt{n} \times \sqrt{n}$ grid
  graph is $O(n^{5/4})$, improving a previous bound of $n^{3/2 + o(1)}$ by
  Clemens, Miralaei, Reding, Schacht, and Taraz.
\end{abstract}

\section{Introduction}

For graphs $G$ and $H$, we say that $G$ is \emph{Ramsey} for $H$, and write $G
\rightarrow H$, if every $2$-colouring of the edges of $G$ contains a
monochromatic copy of $H$. In 1978, Erd\H{o}s, Faudree, Rousseau, and
Schelp~\cite{erdHos1978size} pioneered the study of the \emph{size-Ramsey
number} $\hat r(H)$, defined as the smallest integer $m$ for which there exists
a graph $G$ with $m$ edges such that $G \rightarrow H$. The existence of the
usual Ramsey number $r(H)$ shows that this notion is sensible, since, for any
$H$, it is easy to see that $\hat r(H) \leq \binom{r(H)}{2}$. When $H$ is a
complete graph, this inequality is an equality, a simple fact first observed by
Chv\'{a}tal.

An early example showing that size-Ramsey numbers can exhibit interesting
behaviour was found by Beck~\cite{beck1983size}, who showed that $P_n$, the path
with $n$ vertices, satisfies $\hat r(P_n) = O(n)$, which is significantly
smaller than the $O(n^2)$ bound that follows from applying the inequality above
and the corresponding bound $r(P_n) = O(n)$ for the usual Ramsey number of
$P_n$. In a follow-up paper, Beck~\cite{beck1990size} asked whether a similar
phenomenon occurs for all bounded-degree graphs, that is, whether, for any
integer $\Delta \geq 3$, there exists a constant $c$ such that any graph $H$
with $n$ vertices and maximum degree $\Delta$ has size-Ramsey number at most
$cn$. Although R\"{o}dl and Szemer\'{e}di~\cite{rodl2000size} showed that this
question has a negative answer already for $\Delta=3$, much work has gone into
extending Beck’s result to other natural families of graphs, including:
cycles~\cite{haxell1995induced}, bounded-degree
trees~\cite{friedman1987expanding}, powers of paths and bounded-degree
trees~\cite{berger2020size, clemens2019size, han2020multicolour}, and more
besides.

Most of the known families with linear size-Ramsey numbers have a bounded
structural parameter, such as bandwidth~\cite{clemens2019size} or, more
generally, treewidth~\cite{kamcev2021size} (though see the recent
papers~\cite{draganic2022rolling, letzter2021size} for examples with a somewhat
different flavour). However, a fairly simple family of graphs which does not
fall into any of these categories, but may still have linear size-Ramsey
numbers, is the family of \emph{two-dimensional grid graphs}. For $s \in \N$,
the $s \times s$ grid is the graph with vertex set $[s] \times [s]$ where two
pairs are adjacent if and only if they differ by one in exactly one coordinate.
Obviously, the maximum degree of the $s \times s$ grid is four, but its
bandwidth and treewidth are both exactly $s$ (see, e.g.,
\cite{chvatalova1975optimal}), so the problem of estimating the size-Ramsey
number of this graph, and usually we will take $s = \sqrt{n}$ so that the graph
has $n$ vertices, provides an interesting test case for exploring new ideas and
techniques.

Regarding upper bounds for the size-Ramsey number of the $\sqrt{n} \times
\sqrt{n}$ grid, an important result of Kohayakawa, R\"{o}dl, Schacht, and
Szemer\'{e}di~\cite{kohayakawa2011sparse}, which says that every graph $H$ with
$n$ vertices and maximum degree $\Delta$ satisfies $\hat r(H) \leq
n^{2-1/\Delta+o(1)}$, immediately yields the bound $n^{7/4+o(1)}$. This was
recently improved by Clemens, Miralaei, Reding, Schacht, and
Taraz~\cite{clemens2021size} to $n^{3/2+o(1)}$ (and an alternative proof of this
bound was also noted in our recent paper~\cite{conlon2022size}). The goal of
this short note is to provide an elementary proof of an improved upper bound.

\begin{theorem}\label{thm:main-theorem}
  There exists a constant $C > 0$ such that the size-Ramsey number of the
  $\sqrt{n} \times \sqrt{n}$ grid graph is at most $Cn^{5/4}$.
\end{theorem}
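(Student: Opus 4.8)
The plan is to build a Ramsey host graph for the $s \times s$ grid (with $s = \sqrt{n}$) out of a random graph with the right number of edges, namely roughly $n^{5/4}$. The natural candidate is $G_{N,p}$ with $N = O(\sqrt{n})$ and $p$ chosen so that $\binom{N}{2}p = \Theta(n^{5/4})$, which forces $N$ to be somewhat larger than $\sqrt{n}$ — say $N = n^{1/2+\delta}$ for a small $\delta > 0$ — and $p = n^{1/4 - 2\delta + o(1)}/N \cdot \text{(constant)}$; the exact bookkeeping of exponents is something I would pin down once the embedding argument dictates the required density. The key structural idea is that a grid decomposes naturally into $s$ vertical paths (the columns), each of length $s$, glued together by the horizontal edges; equivalently, it is the Cartesian product $P_s \,\square\, P_s$. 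So I want a host graph that simultaneously (i) contains, in any $2$-colouring, many long monochromatic paths of the same colour, and (ii) has enough connectivity/pseudorandomness that these paths can be stitched together into a grid.

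Concretely, the first step is to invoke the robust path-Ramsey properties of sparse random graphs: by the body of work extending Beck's theorem to random graphs (Friedman–Pippenger-type expansion, or the connectedness/long-path results used in \cite{conlon2021size} and \cite{clemens2021size}), a random graph $G_{N,p}$ with the chosen parameters has the property that in every $2$-colouring there is a colour, say red, such that the red graph contains a well-connected "expander-like" subgraph on a constant fraction of the vertices. The second step is to turn such a red expander into a grid by a greedy/absorption embedding: embed the columns one at a time as vertex-disjoint red paths, and when embedding the $j$-th column ensure that each of its $s$ vertices is joined by a red edge to the correspondingly-placed vertex of the $(j-1)$-st column. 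For this to work I need the red subgraph to retain expansion after deleting the $O(s \cdot s) = O(n)$ vertices used so far — this is why $N$ must exceed $n$ in order of magnitude by a polynomial factor, and it is also why $p$ cannot be taken as small as $1/\sqrt n$: the edges between consecutive columns must survive in red, which by a union bound over the $2^{O(N)}$ colourings forces $p \gtrsim (\log N)/\sqrt n$ or so, and optimising the trade-off between "$N$ large enough for room" and "$Np$ large enough for survival of cross-edges" is exactly what produces the exponent $5/4$.

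The technical heart, and the step I expect to be the main obstacle, is making the column-by-column embedding actually go through: I need a single monochromatic subgraph that is good enough to (a) contain $s$ disjoint paths of length $s$ and (b) guarantee, for every already-embedded column and every choice of the next path, a red perfect matching between the two columns in the correct "rungs" pattern. These two demands interact, because the matching constraint is not symmetric — vertex $i$ of column $j$ must see vertex $i$ of column $j-1$ — so I cannot just ask for a red matching, I need the path I pick for column $j$ to be "compatible" with column $j-1$. The clean way to handle this is to process the embedding so that one first lays down a red path realising column $j-1$, then uses expansion to find, for each rung, a red neighbour, and connects these $s$ red neighbours into a red path (this is where a strong connectivity/short-path property of the red expander is used, to link prescribed vertices by short internally-disjoint red paths without blowing up the vertex budget). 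Handling the accumulated vertex usage is the bookkeeping crux: each linking path has bounded length, so after $s$ columns we have used $O(s^2 \cdot \text{polylog}) = O(n \,\text{polylog})$ vertices, still a tiny fraction of $N = n^{1/2+\delta}\cdot(\text{something})$ — wait, that forces $N \gg n$, i.e.\ $N = n^{1+o(1)}$, and then $\binom N 2 p = n^{5/4}$ gives $p = n^{-3/4+o(1)}$, and one checks $Np = n^{1/4+o(1)}$ is comfortably above the $\text{polylog}$ threshold needed for the union bound over colourings. So the genuine difficulty is not the parameter count but verifying the "resilient expander with prescribed cross-matchings" property of the monochromatic subgraph; once that is isolated as a lemma, the rest is a deterministic embedding argument.

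Let me restate the plan more carefully. First, fix $N = Cn$ for a large constant $C$ (or $n^{1+o(1)}$ if logarithmic factors are needed) and $p$ with $\binom N 2 p = \Theta(n^{5/4})$, so $p = \Theta(n^{-3/4})$ and $Np = \Theta(n^{1/4})$. Second, show that with positive probability $G_{N,p}$ has the property that for every $2$-colouring there is a colour class which, after deleting any $n^{1+o(1)}$ vertices, still has the connectivity and expansion needed to (i) find disjoint paths of length $\sqrt n$ and (ii) link any $\sqrt n$ prescribed vertices into a path through short internally-disjoint detours — this uses standard random-graph concentration together with a union bound over the $2^N$ colourings, which is where $Np = n^{\Omega(1)} \gg \log N$ is exploited. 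Third, run the column-by-column embedding: maintain the invariant that column $j$ is embedded as a monochromatic path whose $i$-th vertex has a monochromatic edge to the $i$-th vertex of column $j-1$; at each step use the resilience from the previous point to extend the embedding, charging only $\sqrt n \cdot \text{polylog}$ new vertices per column. The output is a monochromatic copy of the grid, proving $G_{N,p} \to (\sqrt n \times \sqrt n\text{-grid})$ and hence $\hat r \le \binom N 2 p = O(n^{5/4})$.
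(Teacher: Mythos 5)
Your proposal has a fatal flaw at the very first step: the host graph $G_{N,p}$, with the parameters you propose, does not contain the grid at all, so no Ramsey argument can possibly find a monochromatic copy. Concretely, the $s\times s$ grid with $s=\sqrt n$ has $n$ vertices and roughly $2n$ edges, and the expected number of copies of it in $G_{N,p}$ is on the order of $N^n p^{2n}=(Np^2)^n$. For this not to vanish one needs $Np^2\gtrsim 1$, i.e.\ $p\gtrsim N^{-1/2}$; combined with $N\geq n$ this forces the total edge count $\binom N2 p\gtrsim N^{3/2}\geq n^{3/2}$. With your numbers ($N=n^{1+o(1)}$, $p=n^{-3/4+o(1)}$) one gets $Np^2=n^{-1/2+o(1)}\to 0$, so by the first moment method $G_{N,p}$ is with high probability grid-free. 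This is exactly the barrier pointed out in the introduction of the paper: a plain Erd\H os--R\'enyi host graph cannot beat $O(n^{3/2})$, which is why the previous state of the art was $n^{3/2+o(1)}$. Any approach that starts from $G_{N,p}$ and tunes $N$, $p$ is stuck at that bound, so the tuning in your proposal cannot produce the exponent $5/4$.

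The actual gain in the paper comes from a structural construction that concentrates the edges where they are needed. Take a bounded-degree graph $H$ on $O(s)$ vertices that is $r$-Ramsey for the cycle $C_{\delta s}$ (Haxell--Kohayakawa--\L uczak), blow up each vertex of $H$ to an independent set of size $s$, and replace each edge of $H$ by an $(s,s)$-bipartite random graph of density $p=\Theta(s^{-1/2})$. This has only $\Theta(s)\cdot s^2 p=\Theta(s^{5/2})=\Theta(n^{5/4})$ edges, yet inside each designated bipartite pair the density is $s^{-1/2}$, which is high enough for sparse-regularity machinery. Given a colouring, one projects it to a colouring of $H$ via a density-increment/regularity lemma (the paper's Lemma~\ref{lem:regular-subgraph}), finds a monochromatic cycle $U_1,\dotsc,U_{\delta s}$ of consistently lower-regular pairs, and then embeds the grid \emph{row by row}, sending vertex $(i,j)$ into $U_{i+j-1}$ (indices mod $\delta s$). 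The adjacency constraints to both the left neighbour and the upper neighbour are handled simultaneously by regularity inheritance within the candidate sets $S_{i+j-1}\subseteq U_{i+j-1}$.

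A second, independent problem with the proposal is the embedding step itself. You suggest choosing a red neighbour $w_i$ of each vertex $u_i$ in column $j-1$ and then \emph{linking} $w_1,\dotsc,w_s$ by ``short internally-disjoint red paths''. This produces a subdivision of the grid, not the grid. For the grid itself you must have $w_i\sim w_{i+1}$ directly, which brings in common-neighbourhood constraints: vertex $(i,j)$ must lie in $N_{\mathrm{red}}((i-1,j))\cap N_{\mathrm{red}}((i,j-1))$. In $G_{N,p}$ these common neighbourhoods have expected size $Np^2$, which as computed above tends to zero for your parameters, so the greedy step fails immediately. In the paper's blow-up, both predecessors of $(i,j)$ land in the same set $U_{i+j-2}$ of size $s$, and the common neighbourhood inside $U_{i+j-1}$ is controlled via lower-regularity of pairs of sets of size $\Theta(sp)$, with $sp^2=\Theta(1)$ bounded away from zero by choosing the constant $C$ in $p=Cs^{-1/2}$ large; this is what makes the local embedding step work, and it has no analogue in a uniform $G_{N,p}$ at density below $N^{-1/2}$.
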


Like much of the work on size-Ramsey numbers, the previous bounds for grids were
obtained by applying the sparse regularity method to show that every
$2$-colouring of the edges of the Erd\H{o}s--R\'{e}nyi random graph $\Gnp$, for
some appropriate density $p$, contains a monochromatic copy of the grid.
However, it is a simple exercise in the first moment method to show that for $p
\ll n^{-1/2}$ the random graph $\Gnp$ with high probability does not contain the
$s \times s$ grid graph as a subgraph if $s = \Theta(\sqrt{n})$, so the bound
$O(n^{3/2})$ is the best that one can hope to achieve using this procedure.

To see how it is that we gain on this bound, suppose that $s = \sqrt{n}$. It is
known~\cite{haxell1995induced} that there are $K, \Delta > 0$ and a graph $H$
with $Ks$ vertices and maximum degree at most $\Delta$ which is Ramsey for
$C_s$, the cycle of length $s$. Consider now a `blow-up' $\Gamma$ of $H$
obtained by replacing every $x \in V(H)$ by an independent set $V_x$ of order
$\Theta(s)$ and every $xy \in H$ by a bipartite graph $(V_x,V_y)$ in which every
edge exists independently with probability $p = \Theta(s^{-1/2})$. With high
probability, such a blow-up contains $\Theta(s^{5/2}) = \Theta(n^{5/4})$ edges.
That is, instead of revealing a random graph $\Gnp$ on all $n = \Theta(s^2)$
vertices, we only reveal edges that lie within $\Theta(s)$ bipartite subgraphs,
each with parts of order $\Theta(s)$. This salvages a significant number of
edges which would otherwise go to waste.

Consider now a $2$-colouring of $\Gamma$ and recall that $H$ was chosen so that
$H \rightarrow C_s$. A key lemma, Lemma~\ref{lem:regular-subgraph} below, then
allows us to conclude that there are sets $V_1,\dotsc,V_s$ in $\Gamma$ and a
collection $U_i \subseteq V_i$ of large subsets such that all $(U_i,U_{i+1})$
with $i \in [s]$, where addition is taken modulo $s$, are `regular' in the same
colour. We may then sequentially embed the vertices of the grid so that the
first row is embedded into $U_1,\dotsc,U_s$, the second into
$U_2,\dotsc,U_s,U_1$, and so on.

\section{Definitions and key lemmas}

In this section, we recall several standard definitions and note two key lemmas
that will be needed in the proof of Theorem~\ref{thm:main-theorem}. Most of
these revolve around the concept of \emph{sparse regularity} (for a thorough
overview of which we refer the reader to the survey by Gerke and
Steger~\cite{gerke2005sparse}).

For $\eps > 0$ and $p \in (0, 1]$, a pair of sets $(V_1, V_2)$ is said to be
$(\eps, p)$-\emph{lower-regular} in a graph $G$ if, for all $U_i \subseteq V_i$,
$i \in \{1,2\}$, with $|U_i| \geq \eps|V_i|$, the density $d_G(U_1, U_2) =
e_G(U_1, U_2)/(|U_1||U_2|)$ of edges between $U_1$ and $U_2$ satisfies
\[
  d_G(U_1, U_2) \geq (1-\eps)p.
\]
Immediately from this definition, we get that in every $(\eps,p)$-lower-regular
pair $(V_1, V_2)$, for each $i \in \{1,2\}$, all but at most $\eps|V_i|$
vertices in $V_i$ have degree at least $(1-\eps)p|V_{3-i}|$ into $V_{3-i}$ --- a
fact we will make use of in the proof of Theorem~\ref{thm:main-theorem}. Another
useful and well-known property is that lower-regularity is inherited on large
sets.

\begin{lemma}
  \label{lem:slicing-lemma}
  Let $0 < \eps < \delta$, $p \in (0, 1]$, and let $(V_1, V_2)$ be an $(\eps,
  p)$-lower-regular pair. Then any pair of subsets $V_i' \subseteq V_i$, $i \in
  \{1,2\}$, with $|V_i'| \geq \delta|V_i|$ form an
  $(\eps/\delta,p)$-lower-regular pair.
\end{lemma}

For $\lambda > 0$ and $p \in (0, 1]$, a graph $G$ is said to be
$(\lambda,p)$-\emph{uniform} if, for all disjoint $X, Y \subseteq V(G)$ with
$|X|, |Y| \geq \lambda|V(G)|$, the density of edges between $X$ and $Y$ satisfies
$d_G(X,Y) = (1\pm\lambda)p$. If only the upper bound holds, the graph is said to
be \emph{upper-uniform}.\footnote{For consistency with the existing literature
and for historical reasons, we use both `regular' and `uniform' as terms, even
though they are basically the same concept.} For example, it is easy to see that
the random graph $\Gnp$ is with high probability $(o(1),p)$-uniform whenever $p
\gg 1/n$. If $G = (V_1, V_2; E)$ is bipartite, we say that $G$ is $(\lambda,
p)$-uniform or upper-uniform if the same conditions hold for all $X \subseteq
V_1$ and $Y \subseteq V_2$ with $|X| \geq \lambda |V_1|$ and $|Y| \geq \lambda
|V_2|$. In order to prove our main technical lemma, we rely on the following
result, a simple corollary of \cite[Lemma~6]{kohayakawa2018anti}, whose proof
follows a density increment argument. The same conclusion can also be obtained
by an application of the sparse regularity lemma.

\begin{lemma}\label{lem:regular-edge}
  For all $0 < \eps < 1/2$ and $\alpha \in (0, 1)$, there exists $\lambda > 0$
  such that the following holds for every $p \in (0, 1]$. Let $G = (V_1, V_2;
  E)$ be a $(\lambda, p)$-upper-uniform bipartite graph with $|V_1| = |V_2|$ and
  $|E| \geq \alpha|V_1||V_2|p$. Then there exist $U_i \subseteq V_i$, $i \in
  \{1,2\}$, with $|U_i| = \lambda|V_i|$ such that $(U_1, U_2)$ is $(\eps, \alpha
  p)$-lower-regular in $G$.
\end{lemma}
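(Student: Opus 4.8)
The plan is to follow the density-increment route indicated in the remark after the statement: first produce a pair of \emph{large} subsets that is lower-regular with a fine parameter, and then cut down to subsets of size exactly $\lambda|V_i|$ using the slicing lemma, Lemma~\ref{lem:slicing-lemma}.

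Write $N:=|V_1|=|V_2|$, and fix an auxiliary parameter $\eps_0=\eps_0(\eps,\lambda)>0$ — to be chosen below, morally of order $\eps\lambda$ — with the property that whenever subsets $A\subseteq V_1$, $B\subseteq V_2$ of size at least $\lambda N$ form an $(\eps_0,\alpha p)$-lower-regular pair, any further $U_1\subseteq A$, $U_2\subseteq B$ with $|U_i|=\lambda N$ form an $(\eps,\alpha p)$-lower-regular pair. This reduction is exactly Lemma~\ref{lem:slicing-lemma}: cutting $A$ (respectively $B$) down to a set of size $\lambda N$ is a passage to a subset of relative size at least $\lambda N/N=\lambda$, which multiplies the regularity parameter by at most $1/\lambda$, and $(\eps_0/\lambda,\alpha p)$-lower-regularity with $\eps_0/\lambda\le\eps$ implies $(\eps,\alpha p)$-lower-regularity. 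So it suffices to find a large $(\eps_0,\alpha p)$-lower-regular pair.

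For this I would run the following density increment. Maintain a pair $(A,B)$, initialised at $(V_1,V_2)$, with the invariant $e(A,B)\ge\alpha|A||B|p$. If $(A,B)$ is $(\eps_0,\alpha p)$-lower-regular and $|A|,|B|\ge\lambda N$, we are done. Otherwise fix a witness $W_1\subseteq A$, $W_2\subseteq B$ with $|W_1|\ge\eps_0|A|$, $|W_2|\ge\eps_0|B|$ and $d(W_1,W_2)<(1-\eps_0)\alpha p$; passing to the $\lceil\eps_0|A|\rceil$ vertices of $W_1$ with fewest neighbours in $W_2$, and then symmetrically shrinking $W_2$, we may assume $|W_1|=\lceil\eps_0|A|\rceil$ and $|W_2|=\lceil\eps_0|B|\rceil$. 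Splitting $A\times B$ into $W_1\times W_2$ and the three complementary rectangles, the bound $e(W_1,W_2)<(1-\eps_0)\alpha|W_1||W_2|p$ together with $e(A,B)\ge\alpha|A||B|p$ forces, by averaging, one of the three complementary rectangles $(A',B')$ to satisfy $e(A',B')\ge(\alpha+\delta_0)|A'||B'|p$ with $\delta_0:=\alpha\eps_0^{3}$, while still $|A'|\ge\eps_0|A|$ and $|B'|\ge\eps_0|B|$. Replacing $(A,B)$ by $(A',B')$ increases the normalised density $e(A,B)/(|A||B|p)$ by at least $\delta_0$ while shrinking each side by a factor at least $\eps_0$. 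By $(\lambda,p)$-upper-uniformity, as long as both sides have size at least $\lambda N$ the normalised density is at most $1+\lambda\le2$, so the process stops after at most $T:=\lceil 2/\delta_0\rceil$ steps; hence throughout it both sides stay above $\eps_0^{T}N$, and if we take $\lambda$ no larger than $\eps_0^{T}$ (and at most $\eps$) the upper-uniformity hypothesis remains applicable at every step. With these choices the process cannot terminate by running out of room, so it terminates by producing an $(\eps_0,\alpha p)$-lower-regular pair of sides at least $\lambda N$, and the reduction of the previous paragraph concludes the argument.

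I expect the delicate point to be precisely the joint choice of the three parameters: $T$ is governed by $\delta_0=\alpha\eps_0^{3}$, the admissible $\lambda$ is at most $\eps_0^{T}$, while the trimming step demands roughly $\eps_0\le\eps\lambda$; one must verify these can be satisfied simultaneously — pinning the constants down in the order $\eps\rightsquigarrow\eps_0\rightsquigarrow\delta_0\rightsquigarrow T\rightsquigarrow\lambda$, and, should the crude increment above prove too lossy for that, replacing it by the energy-increment proof of the equitable sparse regularity lemma, which partitions each $V_i$ into equal parts of a prescribed size and returns a regular pair of parts of exactly that size (this is the ``application of the sparse regularity lemma'' alternative mentioned in the text). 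A separate, elementary check should dispose of the degenerate regime in which $\alpha Np$ is bounded, where the hypothesis and the conclusion both become essentially vacuous.
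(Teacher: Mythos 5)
First, note that the paper does not actually prove Lemma~\ref{lem:regular-edge}: it is quoted as a direct consequence of \cite[Lemma~6]{kohayakawa2018anti}, with only the remark that the proof there is a density-increment argument. So your attempt has to stand on its own, and as written it does not close. The increment engine itself is fine (trimming the witness sets to the low-degree vertices, averaging over the three complementary rectangles to gain $\delta_0=\alpha\eps_0^3$ in normalised density, and bounding the number of steps by $T=\lceil 2/\delta_0\rceil$ via upper-uniformity). The fatal point is exactly the one you defer: the exact-size requirement $|U_i|=\lambda|V_i|$. Your increment only guarantees a lower-regular pair whose sides lie somewhere in $[\eps_0^{T}N,N]$, and cutting a side that may be as large as $N$ down to $\lambda N$ via Lemma~\ref{lem:slicing-lemma} degrades the regularity parameter by a factor up to $1/\lambda$, forcing $\eps_0\le\eps\lambda$. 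Combined with your other requirement $\lambda\le\eps_0^{T}$ and $T\ge 2$, this gives $\eps_0\le\eps\lambda\le\eps\,\eps_0^{T}$, i.e.\ $1\le\eps\,\eps_0^{T-1}<1$, a contradiction. So the three constraints you list as ``to be verified'' are provably unsatisfiable: this is not a delicate bookkeeping issue but a structural flaw of the slicing reduction, because $T$ (hence the admissible $\lambda$) blows up as $\eps_0\to 0$ while the slicing demands $\eps_0\lesssim\eps\lambda$.

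The fallback you sketch does not rescue this either: the sparse regularity lemma does not let you prescribe the common part size in advance (only the number of parts up to a range $[t_0,T_0]$ with $T_0$ depending on the input regularity), so the same circularity between the regularity parameter you feed in and the size-ratio loss you must absorb reappears. What actually closes the gap is to avoid Lemma~\ref{lem:slicing-lemma} for the final trimming and instead use the sparse inheritance result of Gerke, Kohayakawa, R\"odl, and Steger \cite[Corollary~3.8]{gerke2007small} (the same tool invoked for Lemma~\ref{lem:typical-vertices}): almost every pair of subsets of a lower-regular pair of any prescribed size $q\ge C/p$ is again lower-regular with only a constant-factor loss in the regularity parameter, independent of the ratio of the sizes. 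Running your increment with a parameter depending only on $\eps$ and $\alpha$ and then selecting subsets of size exactly $\lambda|V_i|$ this way works whenever $\lambda|V_i|p$ is large, and the remaining regime of very small $p$ (where, as you note, the hypotheses become nearly vacuous) needs a short separate argument rather than a passing remark. Alternatively, one can simply quote \cite[Lemma~6]{kohayakawa2018anti}, as the paper does.
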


The next lemma is the crux of our argument. Here and elsewhere, we say that
$(X,Y; E)$ is \emph{lower-regular} if $(X,Y)$ is lower-regular with respect to
the set of edges $E$.

\begin{lemma}\label{lem:regular-subgraph}
  For every $r, \Delta \geq 2$ and $\eps > 0$, there exists $\lambda > 0$ such
  that the following holds for every $p \in (0, 1]$. Let $H$ be a graph on at
  least two vertices with $\Delta(H) \leq \Delta$ and let $\Gamma$ be obtained
  by replacing every $x \in V(H)$ with an independent set $V_x$ of sufficiently
  large order $n$ and every $xy \in H$ by a $(\lambda,p)$-uniform bipartite
  graph between $V_x$ and $V_y$. Then, for every $r$-colouring of the edges of
  $\Gamma$, there exists an $r$-colouring $\phi$ of the edges of $H$ and, for
  every $x \in V(H)$, a subset $U_x \subseteq V_x$ of order $|U_x| = \lambda n$
  such that $(U_x, U_y; E_{\phi(xy)})$ is $(\eps,p/(2r))$-lower-regular for each
  $xy \in H$, where $E_{\phi(xy)} \subseteq E(\Gamma)$ stands for the edges in
  colour $\phi(xy)$.
\end{lemma}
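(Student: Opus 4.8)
The plan is to process the edges of $H$ in batches forming matchings, refining the sets $V_x$ round by round while keeping all of them of equal size, and to decide the colour $\phi(xy)$ of each edge only at the moment it is handled, choosing the majority colour between the current sets. Fix a proper edge colouring of $H$ with $q \le \Delta+1$ colour classes $M_1,\dotsc,M_q$ (Vizing's theorem; a greedy argument with $2\Delta-1$ classes would do just as well). Put $\alpha = 1/(2r)\in(0,1)$, and note that we may assume $\eps < 1/2$, since lower-regularity only becomes stronger as $\eps$ decreases. We now fix the parameters from the top down: set $\eps_q = \eps$, let $\mu_j$ be the constant produced by Lemma~\ref{lem:regular-edge} for the pair $(\eps_j,\alpha)$, and recursively set $\eps_{j-1} = \eps\prod_{i=j}^{q}\mu_i^2$; since there are only $q = q(\Delta)$ rounds this terminates with positive constants $\eps_1,\dotsc,\eps_q$ and $\mu_1,\dotsc,\mu_q$. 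Finally take $\lambda = \prod_{i=1}^{q}\mu_i$ and $\sigma_j = \prod_{i=1}^{j}\mu_i$ (so $\sigma_0 = 1$ and $\sigma_q = \lambda$), and assume $n$ large enough that all the quantities $\sigma_j n$ are integers.

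Given an $r$-colouring of $E(\Gamma)$, put $V_x^{(0)} = V_x$ and run through rounds $j = 1,\dotsc,q$. In round $j$, for each edge $xy\in M_j$ the sets $V_x^{(j-1)}$ and $V_y^{(j-1)}$ have size $\sigma_{j-1}n \ge \lambda n$, so $(\lambda,p)$-uniformity of the $xy$-block of $\Gamma$ gives $e_\Gamma(V_x^{(j-1)},V_y^{(j-1)}) \ge (1-\lambda)\sigma_{j-1}^2 n^2 p$; hence some colour, which we declare to be $\phi(xy)$, carries at least $\tfrac{1-\lambda}{r}\sigma_{j-1}^2 n^2 p \ge \alpha|V_x^{(j-1)}||V_y^{(j-1)}|p$ of them. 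The colour-$\phi(xy)$ subgraph on $(V_x^{(j-1)},V_y^{(j-1)})$ is a subgraph of the $(\lambda,p)$-uniform $xy$-block, and since $\lambda \le \mu_j\sigma_{j-1}$ (because $\mu_j\sigma_{j-1} \ge \prod_{i=1}^{q}\mu_i = \lambda$) it is $(\mu_j,p)$-upper-uniform on its own vertex set. Applying Lemma~\ref{lem:regular-edge} with parameters $\eps_j$ and $\alpha$ yields $U_x\subseteq V_x^{(j-1)}$, $U_y\subseteq V_y^{(j-1)}$ of size $\mu_j\sigma_{j-1}n = \sigma_j n$ with $(U_x,U_y)$ being $(\eps_j,\alpha p)$-lower-regular in colour $\phi(xy)$; set $V_x^{(j)} = U_x$ and $V_y^{(j)} = U_y$. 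For every vertex $z$ not covered by $M_j$, let $V_z^{(j)}$ be any subset of $V_z^{(j-1)}$ of size $\sigma_j n$, so that after round $j$ all current sets again have the common size $\sigma_j n$ — this equality is exactly what is needed to keep invoking Lemma~\ref{lem:regular-edge}.

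After round $q$, set $U_x = V_x^{(q)}$, so $|U_x| = \sigma_q n = \lambda n$ for all $x$. Fix an edge $xy\in H$ and let $j$ be such that $xy\in M_j$. Then $(V_x^{(j)},V_y^{(j)})$ is $(\eps_j,\alpha p)$-lower-regular in colour $\phi(xy)$, while $U_x\subseteq V_x^{(j)}$ and $U_y\subseteq V_y^{(j)}$ with $|U_x|/|V_x^{(j)}| = |U_y|/|V_y^{(j)}| = \sigma_q/\sigma_j = \prod_{i=j+1}^{q}\mu_i =: \delta_j$. Since $\eps_j = \eps\delta_j^2 < \delta_j$, two applications of Lemma~\ref{lem:slicing-lemma}, one on each side, show that $(U_x,U_y)$ is $(\eps_j/\delta_j^2,\alpha p) = (\eps,\alpha p)$-lower-regular in colour $\phi(xy)$, which, because $\alpha = 1/(2r)$, is precisely the required $(\eps,p/(2r))$-lower-regularity. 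Together with the colouring $\phi$ built along the way, this proves the lemma.

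The main obstacle is the circular dependence between the regularity parameter fed into Lemma~\ref{lem:regular-edge} and the amount of later shrinking a pair must survive: a pair built in round $j$ is eventually sliced by the factor $\delta_j$, so it must have been created with parameter roughly $\eps\delta_j^2$, yet $\delta_j$ is itself a product of outputs of Lemma~\ref{lem:regular-edge} run on exactly those parameters. This is resolved by the fact that the number of rounds is a constant $q = q(\Delta)$, which makes the backward recursion $\eps_q\to\eps_{q-1}\to\dotsb$ finite and hence well defined. The remaining points — keeping all current vertex sets of equal size by refreshing the untouched ones each round, and checking that the restricted colour classes stay $(\mu_j,p)$-upper-uniform — are routine once $\lambda$ is chosen small relative to the $\mu_j$.
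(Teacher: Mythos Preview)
Your proof is correct and follows essentially the same route as the paper's: decompose $E(H)$ into at most $\Delta+1$ matchings, process them round by round applying Lemma~\ref{lem:regular-edge} to the majority colour on the current sets, and use Lemma~\ref{lem:slicing-lemma} to carry the regularity through the later shrinkings. The only cosmetic differences are that the paper maintains regularity of all already-processed edges as a running invariant (updating the parameter each round) rather than deferring one big slicing step to the end, and that it uses the tighter recursion $\eps_j = \eps\delta_j$ instead of your $\eps_j = \eps\delta_j^2$, since a single application of Lemma~\ref{lem:slicing-lemma} already handles both sides at once.
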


\begin{proof}
  Given $\eps$, $r$, and $\Delta$, we let $\alpha = 1/(2r)$, $\eps_{\Delta+1} :=
  \eps$, $\lambda_{\Delta+1} =
  \lambda_{\ref{lem:regular-edge}}(\eps_{\Delta+1},\alpha)$, and, for every $i =
  \Delta, \dotsc, 1$, sequentially take $\eps_i = \eps_{i+1}\lambda_{i+1}$ and
  $\lambda_i = \lambda_{\ref{lem:regular-edge}}(\eps_i,\alpha)$. Lastly, let
  $\lambda = \prod_{i \in [\Delta+1]} \lambda_i$.

  Fix any $r$-colouring of (the edges of) $\Gamma$ and, for every $c \in [r]$,
  let $\Gamma_c$ stand for the subgraph (in terms of edges) in colour $c$. Note
  that $H$ has edge-chromatic number at most $\Delta+1$. In other words, there
  exists a partition of the edges of $H$ into $H_1,\dotsc,H_{\Delta+1}$ such
  that each $H_i$ is a matching. We find the required collection $\{U_x\}_{x \in
  V(H)}$ by maintaining the following condition for every $i \in [\Delta+1]$:
  for every $x \in V(H)$, there exists a chain $V_x = U_x^0 \supseteq U_x^1
  \supseteq U_x^2 \supseteq \dotsb \supseteq U_x^i$ such that
  \begin{enumerate}[label=(\emph{\roman*}), ref=(\emph{\roman*})]
    \item\label{reg-sub-size} $|U_x^j| = \lambda_j|U_x^{j-1}|$ for all $j \in
      [i]$ and
    \item\label{reg-sub-low-reg} for every $xy \in \bigcup_{j \leq i} H_j$,
      $(U_x^i, U_y^i)$ is $(\eps_i,\alpha p)$-lower-regular in $\Gamma_c$ for
      some $c \in [r]$.
  \end{enumerate}

  Consequently, for $i = \Delta+1$, we obtain sets $U_x \subseteq V_x$, for
  every $x \in V(H)$, of order $|U_x| = (\prod_{i \in [\Delta+1]} \lambda_i) n =
  \lambda n$ such that $(U_x,U_y)$ is $(\eps_{\Delta+1},\alpha p)$-lower-regular
  and, thus, $(\eps,\alpha p)$-lower-regular for every $xy \in H$. It remains to
  show that we can indeed do this.

  Consider first $i = 1$. For each $xy \in H_1$, let $c \in [r]$ be the majority
  colour in $\Gamma[V_x,V_y]$. As $e_{\Gamma_c}(V_x, V_y) \geq
  (1-\lambda)n^2p/r$, we may apply Lemma~\ref{lem:regular-edge} with $\eps_1$
  (as $\eps$) and $\Gamma_c[V_x,V_y]$ (as $G$) to obtain sets $U_x^1, U_y^1$
  with the desired properties. For every $x \in V(H)$ which is isolated in
  $H_1$, we simply take an arbitrary subset $U_x^1 \subseteq V_x$ of order
  $\lambda_1|U_x^0|$. Thus, the required condition holds for $i=1$.

  Suppose now that the condition holds for some $i \geq 1$ and let us show that
  it also holds for $i+1$. As above, for every $xy \in H_{i+1}$, let $c \in [r]$
  be the majority colour in $\Gamma[V_x,V_y]$. Since $\Gamma[V_x, V_y]$ is
  $(\lambda,p)$-uniform and, by \ref{reg-sub-size}, $|U_x^i|, |U_y^i| \geq
  \lambda n$, we have $e_\Gamma(U_x^i, U_y^i) = (1 \pm \lambda)|U_x^i||U_y^i|p$
  and, hence,
  \[
    (1-\lambda)|U_x^i||U_y^i|p/r \leq e_{\Gamma_c}(U_x^i,U_y^i) \leq
    (1+\lambda)|U_x^i||U_y^i|p.
  \]
  Lemma~\ref{lem:regular-edge} applied to $\Gamma_c[U_x^i,U_y^i]$ with
  $\eps_{i+1}$ (as $\eps$) gives sets $U_x^{i+1} \subseteq U_x^i$ and $U_y^{i+1}
  \subseteq U_y^i$ of order
  \[
    |U_x^{i+1}| = \lambda_{i+1}|U_x^i| \qquad \text{and} \qquad |U_y^{i+1}| =
    \lambda_{i+1}|U_y^i|
  \]
  for which $(U_x^{i+1},U_y^{i+1})$ is $(\eps_{i+1},\alpha p)$-lower-regular in
  $\Gamma_c$. For every $x \in V(H)$ which is isolated in $H_{i+1}$, we again
  take an arbitrary subset $U_x^{i+1} \subseteq U_x^i$ of order
  $\lambda_{i+1}|U_x^i|$. Observe also that, for every $xz \in \bigcup_{j \leq
  i} H_j$, since $(U_x^i,U_z^i)$ was $(\eps_i,\alpha p)$-lower-regular in
  $\Gamma_{c'}$ for some $c' \in [r]$ and $|U_x^{i+1}| = \lambda_{i+1}|U_x^i|$,
  Lemma~\ref{lem:slicing-lemma} and the fact that $\eps_i/\lambda_{i+1} =
  \eps_{i+1}$ imply that $(U_x^{i+1},U_z^{i+1})$ is $(\eps_{i+1},\alpha
  p)$-lower-regular in $\Gamma_{c'}$, as desired. This completes the proof.
\end{proof}

We also need a variant of a result from our previous
paper~\cite[Lemma~3.5]{conlon2022size} about regularity inheritance. While that
result was stated for the usual (full) notion of regularity, we only need
lower-regularity here, allowing us to save a factor of $(\log n)^{1/2}$.

\begin{lemma}\label{lem:typical-vertices}
  For all $\eps, \alpha, \lambda > 0$, there exist positive constants
  $\eps'(\eps,\alpha)$ and $C(\eps,\alpha,\lambda)$ such that for $p \geq
  Cn^{-1/2}$, with probability at least $1 - o(n^{-5})$, the random graph
  $\Gamma \sim \Gnp$ has the following property.

  Suppose $G \subseteq \Gamma$ and $V_1, V_2 \subseteq V(\Gamma)$ are disjoint
  subsets of order $\tilde n = \lambda n$ such that $(V_1, V_2)$ is
  $(\eps',\alpha p)$-lower-regular in $G$. Then there exists $B \subseteq
  V(\Gamma)$ of order $|B| \leq \eps\tilde n$ such that, for each $v, w \in
  V(\Gamma) \setminus (V_1 \cup V_2 \cup B)$ (not necessarily distinct), the
  following holds: for any two subsets $N_v \subseteq N_\Gamma(v, V_1)$ and $N_w
  \subseteq N_\Gamma(w, V_2)$ of order $\alpha \tilde n p/4$, both $(N_v, V_2)$
  and $(N_v, N_w)$ are $(\eps,\alpha p)$-lower-regular in $G$.
\end{lemma}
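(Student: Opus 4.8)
The plan is to bound the number of ``bad'' vertices $v$ — those for which some choice of a large neighbourhood subset $N_v \subseteq N_\Gamma(v, V_1)$ fails to inherit lower-regularity against $V_2$ (or against some $N_w$) — and show that with probability $1 - o(n^{-5})$ this number is at most $\eps n$, uniformly over all choices of $G$, $V_1$, $V_2$. The key observation is that lower-regularity is a ``one-sided'' and monotone condition: a pair $(A, B)$ is $(\eps, \alpha p)$-lower-regular in $G$ precisely when \emph{no} pair of subsets $A' \subseteq A$, $B' \subseteq B$ with $|A'| \ge \eps|A|$, $|B'| \ge \eps|B|$ has $e_G(A', B') < (1-\eps)\alpha p|A'||B'|$. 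So $(N_v, V_2)$ fails to be lower-regular iff there exist witnesses $N_v' \subseteq N_v$ and $W \subseteq V_2$ with the wrong (too small) edge count. This means we only ever need to control \emph{lower} tails of edge counts between subsets, which concentrate well, and — crucially — we never need a union bound over the exponentially many subsets of $V_2$, because we can first fix the witness sets inside $V_1$ and $V_2$ and then ask how many vertices $v$ have large neighbourhood into a bad witness set.

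Concretely, I would proceed as follows. First, choose $\eps'$ small in terms of $\eps, \alpha$ (roughly $\eps' \ll \eps^2 \alpha$), and set up the standard ``typical graph'' event for $\Gamma \sim \Gnp$: for $p \ge Cn^{-1/2}$ and $C$ large, with probability $1 - o(n^{-5})$, (a) every vertex $v$ has $|N_\Gamma(v, V_1)| = (1 \pm \eps'')np$-ish for every set $V_1$ of size $\lambda n$ — more precisely, the number of vertices with degree into a fixed $\lambda n$-set deviating by a constant factor is tiny — and (b) for all pairs of sets $X, Y$ of size $\ge \gamma n$ (for a suitable small $\gamma$ depending on $\alpha, \eps, \lambda$), $e_\Gamma(X,Y) = (1 \pm \lambda)|X||Y|p$; these follow from Chernoff bounds and a union bound over at most $4^n$ set-pairs, which is absorbed because the failure probability per pair is $e^{-\Omega(\gamma^2 n^2 p)} = e^{-\omega(n)}$. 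Next, fix a witness pair $W_1 \subseteq V_1$, $W_2 \subseteq V_2$ with $|W_i| \ge \eps' \tilde n = \eps'\lambda n$. By the lower-regularity of $(V_1, V_2)$ in $G$ we have $e_G(W_1, W_2) \ge (1-\eps')\alpha p |W_1||W_2|$, so $W_1$ and $W_2$ are reasonably well connected \emph{in $G$}. Call a vertex $v$ \emph{$W_1$-rich} if $|N_\Gamma(v, W_1)| \ge \alpha \tilde n p/4 \cdot (\text{fraction})$, i.e.\ if $v$ could possibly pick a size-$(\alpha\tilde np/4)$ subset $N_v$ landing mostly in $W_1$; and within $W_1$-rich vertices, further call $v$ \emph{bad for $(W_1, W_2)$} if a large subset of its $\Gamma$-neighbourhood in $V_1$ can be packed into $W_1$ while $e_G(\text{that subset}, W_2)$ is too small. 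The heart of the argument is a first-moment / concentration bound showing that for each \emph{fixed} $(W_1, W_2)$ the number of bad vertices is at most $e^{-\Omega(\alpha \tilde n p)} n \le \eps n / 4^{2\lambda n}$ with the stated failure probability, so that a union bound over all $\le 4^{\lambda n} \cdot 4^{\lambda n}$ witness pairs $(W_1, W_2)$ leaves a bad set $B$ of size $\le \eps n$. The same scheme handles the $(N_v, N_w)$ conclusion: there the witness sets $W_1 \subseteq V_1$, $W_2 \subseteq V_2$ together with the degree-typicality of $w$ into $V_2$ let one certify $(N_v, N_w)$ by the same ``no bad sub-sub-pair'' reformulation, and one union-bounds over witness pairs once more.

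The step I expect to be the main obstacle is the concentration estimate controlling, for a fixed bad witness pair $(W_1, W_2)$, the number of vertices $v$ whose $\Gamma$-neighbourhood into $V_1$ has a size-$(\alpha\tilde np/4)$ subset contained in $W_1$ \emph{with} the $G$-edge count into $W_2$ below $(1-\eps)\alpha p$ times the product of sizes. The subtlety is the interplay of two different randomness sources: the edges from $v$ to $V_1$ are random in $\Gamma$, while the edges of $G$ inside $V_1 \cup V_2$ are already fixed (we only know the lower-regularity of $(V_1, V_2)$). The clean way around this is to note that for each fixed \emph{target} subset $N_v^\ast \subseteq W_1$ of size $\alpha\tilde np/4$, whether $(N_v^\ast, W_2)$ has too few $G$-edges is a property of $G$ alone, and can be read off from the $(\eps', \alpha p)$-lower-regularity of $(V_1, V_2)$ together with the slicing lemma (Lemma~\ref{lem:slicing-lemma}) provided $|N_v^\ast| \ge \eps' \tilde n$ and $|W_2| \ge \eps'\tilde n$ — i.e.\ if $\alpha \tilde n p/4 \ge \eps'\tilde n$, which holds for $p \ge Cn^{-1/2}$ with $C = C(\eps', \alpha)$ large, since then $\alpha p/4 \ge \eps'$. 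But $\alpha p/4 \ge \eps'$ fails for the range of $p$ we actually care about, so instead one must argue: if $v$ is bad, then averaging over subsets $N_v^\ast$ shows a noticeable fraction of pairs $(a, b) \in N_v \times W_2$ are non-edges of $G$ beyond what lower-regularity of $(V_1, V_2)$ permits on \emph{any} set of size $\ge \eps' \tilde n$; hence $N_v$ itself (which has size $\ll \eps'\tilde n$) is a ``locally sparse'' spot. The number of vertices $v$ that see such a locally sparse size-$(\alpha\tilde np/4)$ set inside a \emph{fixed} $W_1$ is then controlled by a direct union bound over the $\binom{|W_1|}{\alpha\tilde np/4}$ choices of $N_v^\ast$, each contributing at most $e^{-\Omega(np \cdot \alpha\tilde np/4 / n)} \cdot n = e^{-\Omega(\alpha \tilde n p^2)}n$-many vertices by Chernoff on $|N_\Gamma(v, N_v^\ast)|$; the binomial factor $\binom{\lambda n}{\alpha\tilde n p/4} \le e^{O(\alpha\tilde n p \log(1/p))}$ is beaten by the $e^{-\Omega(\alpha\tilde np^2)}$ decay exactly when $np^2 \gg \log(1/p)$, i.e.\ $p \gg \sqrt{\log n / n}$, which is why the hypothesis is $p \ge Cn^{-1/2}$ with $C$ large rather than $p \ge n^{-1/2}$ on the nose. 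Balancing these exponents carefully — and checking that the resulting failure probability is $o(n^{-5})$ after the two layers of union bounds over $(W_1, W_2)$ — is the technical core; everything else (degree typicality, the $(N_v, V_2)$ case as a special case of $(N_v, N_w)$, the final assembly of $B$) is routine.
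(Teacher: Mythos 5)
Your proposal tries to rebuild the lemma from scratch, whereas the paper simply re-runs the proof of Lemma~3.5 from its predecessor \cite{conlon2021size}, swapping out the full-regularity inheritance lemma (\v{S}kori\'c--Steger--Truji\'c, which needs the inherited set to have order at least $C\log n/p$) for the lower-regularity inheritance lemma of Gerke, Kohayakawa, R\"odl, and Steger \cite[Corollary~3.8]{gerke2007small}, which only requires order $C/p$. That single substitution is the \emph{entire} source of the improvement from $p \geq C(\log n/n)^{1/2}$ to $p \geq Cn^{-1/2}$, and it is precisely the ingredient your argument is missing. The GKRSt lemma gives the crucial \emph{counting} statement: for a fixed $(\eps',\alpha p)$-lower-regular pair $(V_1,V_2)$, the number of $q$-subsets $N\subseteq V_1$ with $(N,V_2)$ failing to be $(\eps,\alpha p)$-lower-regular is at most $\beta^q\binom{|V_1|}{q}$ with $\beta=\beta(\eps,\eps')$ arbitrarily small, as soon as $q\geq C/p$. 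That exponential-in-$q$ bound, paired with $\Pr[N\subseteq N_\Gamma(v,V_1)]=p^q$ so that $\binom{|V_1|}{q}p^q\leq(4e/\alpha)^q$, is what makes the union bound over choices of $N$ close with no leftover $\log$.

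Your own accounting, by contrast, trades $\binom{\lambda n}{q}\leq e^{O(q\log(1/p))}$ against a Chernoff-type decay $e^{-\Omega(qp)}$, and you correctly observe this balances only when $np^2\gg\log(1/p)$, i.e.\ $p\gg\sqrt{\log n/n}$. But then you misread this: $p\gg\sqrt{\log n/n}$ is strictly stronger than $p\geq Cn^{-1/2}$ for any constant $C$, so your argument as laid out would only recover the weaker threshold the paper is explicitly trying to improve upon. Separately, the concentration step itself is confused: you invoke ``Chernoff on $|N_\Gamma(v,N_v^\ast)|$'', but $|N_\Gamma(v,N_v^\ast)|$ has mean $qp=\Theta(\alpha\lambda C^2)$, a constant when $p=Cn^{-1/2}$, so Chernoff on that quantity gives only $O(1)$ decay in the exponent, not the $e^{-\Omega(\alpha\tilde np^2)}$ you claim; and the relevant event is $N_v^\ast\subseteq N_\Gamma(v,V_1)$, which has probability $p^q$, not a large-deviation event for $|N_\Gamma(v,N_v^\ast)|$. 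Your ``averaging over subsets'' sketch points in the right direction, but making it precise is exactly the content of the GKRSt lemma; without that (or an equivalent defect-form counting argument), the proof does not reach the stated $p\geq Cn^{-1/2}$.
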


\begin{proof}[Sketch of the proof.]
  The proof proceeds along the same lines as the proof of
  \cite[Lemma~3.5]{conlon2022size}. The only difference is that there we made
  use of an inheritance lemma for \emph{full regularity} (namely, Corollary~3.5
  in \cite{vskoric2018local}), which requires the sets on which regularity is
  inherited to be of order at least $C\log n/p$, resulting in the requirement
  that $p \geq C(\log n/n)^{1/2}$. However, for lower-regularity, one can
  instead use the inheritance lemma of Gerke, Kohayakawa, R\"{o}dl, and
  Steger~\cite[Corollary~3.8]{gerke2007small}, which only requires the sets to
  be of order at least $C/p$, resulting in $p \geq Cn^{-1/2}$. The rest of the
  proof remains exactly the same.
\end{proof}

\section{Proof of Theorem~\ref{thm:main-theorem}}

Since it requires no additional work, we will actually prove the $r$-colour
analogue of Theorem~\ref{thm:main-theorem}. More precisely, we will show that
for every integer $r \geq 2$ there exists a graph of order $n$ with $O(n^{5/4}$)
edges for which every $r$-colouring of the edges contains a monochromatic copy
of the $\delta\sqrt{n} \times \delta\sqrt{n}$ grid for some $\delta > 0$.

By a result of Haxell, Kohayakawa, and
{\L}uczak~\cite[Theorem~10]{haxell1995induced}, there exist constants $K, \Delta
> 0$, both depending only on $r$, such that, for every sufficiently large $s \in
\N$, there is a graph $H$ on $Ks$ vertices with maximum degree at most $\Delta$
which has the property that every $r$-colouring of its edges contains a
monochromatic copy of $C_\ell$, the cycle of length $\ell$, for every $\log s
\ll \ell \leq s$. Let
\[
  \alpha = 1/(2r), \enskip \eps = \alpha/256, \enskip \eps' =
  \eps'_{\ref{lem:typical-vertices}}(\eps/9, \alpha), \enskip \lambda =
  \lambda_{\ref{lem:regular-subgraph}}(r, \Delta, \eps'), \enskip \text{and}
  \enskip \delta = \min\{1/(4K), \eps\lambda/4 \}.
\]
We show that the size-Ramsey number of the $\delta s \times \delta s$ grid is
$O(s^{5/2})$, which, for $s = \sqrt{n}$, implies the desired statement.

Let $\Gamma$ be a graph obtained by replacing every vertex $x \in V(H)$ by an
independent set $V_x$ of order $s$ and every edge $xy \in H$ by a bipartite
graph between $V_x$ and $V_y$ in which each edge exists independently with
probability $p = Cs^{-1/2}$ for some sufficiently large constant $C > 0$. With
high probability, $\Gamma$ has the following property:
\stepcounter{propcnt}
\begin{alphenum}
  \item\label{prp:upper-uniform} $e_\Gamma(V_x', V_y') =
    (1\pm\lambda)|V_x'||V_y'| p$ for every $xy \in H$ and $V_x' \subseteq V_x$
    and $V_y' \subseteq V_y$ with $|V_x'||V_y'|p \geq 100s/\lambda^2$.
\end{alphenum}
This is a standard feature of random graphs and follows from the Chernoff bound
together with an application of the union bound. In particular, it establishes
that with high probability $\Gamma[V_x,V_y]$ is $(\lambda,p)$-uniform for every
$xy \in H$ and, therefore, $\Gamma$ has at most
\[
  Ks \cdot \Delta/2 \cdot (1+\lambda)s^2p = O(s^{5/2})
\]
edges. Additionally, with high probability, $\Gamma$ is such that every
$\Gamma[V_x \cup V_y \cup V_z]$ has the property of
Lemma~\ref{lem:typical-vertices} (applied with $\eps/9$ as $\eps$, $\lambda/3$
as $\lambda$, and $3s$ as $n$) for every path $xyz$ of length two in
$H$.\footnote{Technically, to apply the lemma, we must also temporarily reveal
the edges between $V_x$ and $V_z$ and within each $V_x, V_y, V_z$, but, unless
$xz$ is itself an edge of $H$, these are all then removed from $\Gamma$.} This
again follows from the union bound, as there are $O(s)$ such paths in total and
the conclusion of Lemma~\ref{lem:typical-vertices} holds with probability $1 -
o(s^{-5})$ for every fixed path. We now fix an outcome of $\Gamma$ which
satisfies all of these properties.

Consider some $r$-colouring of the edges of $\Gamma$ and let $\phi$ be the
colouring of the edges of $H$ given by Lemma~\ref{lem:regular-subgraph} (applied
with $\varepsilon'$ as $\varepsilon$). By the choice of $H$, this colouring
contains a monochromatic copy of $C_{\delta s}$, which, without loss of
generality, we may assume has vertices $1,\dotsc,\delta s$. Therefore, there is
a colour $c \in [r]$ and sets $U_i$ of order $\tilde s = \lambda s$ in $\Gamma$
such that, for every $i \in [\delta s]$, the pair $(U_i,U_{i+1})$ is
$(\eps',\alpha p)$-lower-regular in the subgraph of $\Gamma$ induced by colour
$c$, where we identify $\delta s + i$ with $i$. Let $G$ be the graph induced by
these sets whose edges are the edges of $\Gamma$ of colour $c$. We will show
that $G$ contains the $\delta s \times \delta s$ grid as a subgraph.

For every $i \in [\delta s]$, let $B \subseteq U_i \cup U_{i+1} \cup U_{i+2}$ be
the set given by Lemma~\ref{lem:typical-vertices} (which was applied with
$\eps/9$ as $\eps$, $\lambda/3$ as $\lambda$, and $3s$ as $n$) on $\Gamma[U_i
\cup U_{i+1} \cup U_{i+2}]$, which is a set of `bad vertices' for the pair
$(U_{i+1}, U_{i+2})$. As each $U_i$ is a part of three such applications, by
the chosen properties of $\Gamma$, for every $i \in [\delta s]$ there exists a
set $B_i \subseteq U_i$ of order $|B_i| \leq \eps\tilde s$ such that:
\stepcounter{propcnt}
\begin{alphenum}
  \item\label{prp:one-sided-inh} $(N_v, U_{i+2} \setminus B_{i+2})$ is $(\eps,
    \alpha p)$-lower-regular\footnote{The conclusion of
    Lemma~\ref{lem:typical-vertices} states that $(N_v, U_{i+2})$ is
    $(\eps/9,\alpha p)$-lower-regular, but, as $B_{i+2}$ is small,
    Lemma~\ref{lem:slicing-lemma} implies that $(N_v, U_{i+2} \setminus
    B_{i+2})$ is $(\eps,\alpha p)$-lower-regular.} in $G$ for every $v \in U_i
    \setminus B_i$ and $N_v \subseteq N_G(v, U_{i+1})$ of order $\alpha\tilde
    sp/4$ and
  \item\label{prp:two-sided-inh} $(N_v, N_u)$ is $(\eps, \alpha
    p)$-lower-regular in $G$ for every $v \in U_i \setminus B_i$, $u \in U_{i+1}
    \setminus B_{i+1}$ and $N_v \subseteq N_G(v, U_{i+1})$, $N_u \subseteq
    N_G(u, U_{i+2})$, each of order $\alpha\tilde sp/4$.
\end{alphenum}

Our plan is to embed the vertex $(i, j)$ of the $\delta s \times \delta s$ grid
into $U_{i+j-1}$. The next claim helps us achieve this.

\begin{claim}\label{cl:embedding-invariant}
  Let $i \in [\delta s]$. Suppose that sets $S_{i+j-1} \subseteq U_{i+j-1}
  \setminus B_{i+j-1}$ of order $\alpha\tilde sp/4$ are given for each $j \in
  [\delta s]$ and that $(S_{i+j-1}, S_{i+j})$ and $(S_{i+j-1}, U_{i+j} \setminus
  B_{i+j})$ are $(\eps, \alpha p)$-lower-regular. Then, for every $Q_{i+j-1}
  \subseteq U_{i+j-1}$, $j \in [\delta s]$, of order $|Q_{i+j-1}| \leq 2\eps
  \tilde s$, there exists a path $v_1,\dotsc,v_{\delta s}$ with each $v_j \in
  S_{i+j-1}$ such that $|N_G(v_j, U_{i+j} \setminus Q_{i+j})| \geq \alpha\tilde
  sp/4$.
\end{claim}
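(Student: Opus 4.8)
The plan is a two-phase argument: first prune each $S_{i+j-1}$ to the vertices already having the required forward degree, then build the path inside the pruned sets by a backward sweep that uses only lower-regularity of consecutive pairs. For Phase~1, for $j\in[\delta s]$ put
\[
  \widetilde S_j \;:=\; \bigl\{\, v\in S_{i+j-1} \;:\; |N_G(v,\,U_{i+j}\setminus Q_{i+j})|\ge \alpha sp/4 \,\bigr\},
\]
and bound $|\widetilde S_j|$ from below using the $(\eps,\alpha p)$-lower-regularity of the pair $(S_{i+j-1},U_{i+j}\setminus B_{i+j})$. Since $|B_{i+j}|\le\eps s$ and $|Q_{i+j}|\le 2\eps s$ are small compared with $|U_{i+j}|=\lambda s$, the set $(U_{i+j}\setminus B_{i+j})\setminus Q_{i+j}$ is still a large fraction of $U_{i+j}\setminus B_{i+j}$, hence an admissible test set in that pair; if more than an $\eps$-fraction of $S_{i+j-1}$ sent fewer than $\alpha sp/4$ edges into it, lower-regularity would force too many edges there, a contradiction. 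So $|\widetilde S_j|\ge(1-\eps)|S_{i+j-1}|$, and Lemma~\ref{lem:slicing-lemma} makes each consecutive pair $(\widetilde S_j,\widetilde S_{j+1})$ $(2\eps,\alpha p)$-lower-regular.

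For Phase~2, compute ``extendable'' subsets backward: set $C_{\delta s}:=\widetilde S_{\delta s}$ and, for $j=\delta s-1,\dots,1$, let $C_j:=\{v\in\widetilde S_j:N_G(v,C_{j+1})\ne\emptyset\}$. Inductively $|C_{j+1}|\ge(1-2\eps)|\widetilde S_{j+1}|\ge 2\eps|\widetilde S_{j+1}|$, so if $|\widetilde S_j\setminus C_j|\ge 2\eps|\widetilde S_j|$, then applying $(2\eps,\alpha p)$-lower-regularity of $(\widetilde S_j,\widetilde S_{j+1})$ to the test sets $\widetilde S_j\setminus C_j$ and $C_{j+1}$ gives edge-density at least $(1-2\eps)\alpha p>0$ between them, contradicting the definition of $C_j$; hence $|C_j|\ge(1-2\eps)|\widetilde S_j|>0$ and the recursion closes. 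Pick $v_1\in C_1$ and, for $j=1,\dots,\delta s-1$, pick $v_{j+1}\in N_G(v_j,C_{j+1})$, which is nonempty since $v_j\in C_j$. As $v_1,\dots,v_{\delta s}$ lie in the pairwise distinct sets $U_i,\dots,U_{i+\delta s-1}$, they are distinct and $v_1\cdots v_{\delta s}$ is a path in $G$; and $v_j\in\widetilde S_j$ gives $|N_G(v_j,U_{i+j}\setminus Q_{i+j})|\ge\alpha sp/4$, as required.

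The delicate step is Phase~1: one must check that after deleting $B_{i+j}$ and $Q_{i+j}$ from $U_{i+j}$, enough room remains that the remainder is a valid $\eps$-scale test set and that the threshold $\alpha sp/4$ is provably attained by all but an $\eps$-fraction of $S_{i+j-1}$ — this is where the numerical choices enter, in particular $\delta\le\eps/4$ (which keeps $|Q_{i+j}|\le\delta s$ genuinely small relative to the $U$'s). Phase~2 is then a routine backward-greedy embedding; the only point to watch, namely $2\eps<1/2$ so that the slicing lemma and the $C_j$ recursion both run, is immediate from $\eps=\alpha/256$.
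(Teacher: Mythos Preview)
Your two-phase scheme --- prune each $S_{i+j-1}$ to vertices with large forward degree, then run a backward sweep to guarantee extendability, then build the path greedily --- is exactly the structure of the paper's proof. Your Phase~2 sets $C_j$ match the paper's sets $S_{i+j-1}''$ almost verbatim (backward induction, slicing to $(2\eps,\alpha p)$-lower-regularity, then the min-degree consequence).

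The one noteworthy difference is in Phase~1. The paper argues via upper-uniformity: a vertex with large degree into $U_{i+j}\setminus B_{i+j}$ but small degree into $U_{i+j}\setminus Q_{i+j}$ must send many edges into $Q_{i+j}$, and property~\ref{prp:upper-uniform} caps the total number of such edges, hence the number of bad vertices. You instead apply lower-regularity of $(S_{i+j-1},U_{i+j}\setminus B_{i+j})$ directly to the test set $W=(U_{i+j}\setminus B_{i+j})\setminus Q_{i+j}$, bypassing~\ref{prp:upper-uniform} entirely. This is a legitimate and slightly cleaner route: since $|Q_{i+j}|\le 2\eps s$ removes only a small piece, $W$ is still an admissible test set, and the standard min-degree consequence of lower-regularity yields the same conclusion the paper reaches after its edge-counting detour. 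One quibble: your aside that ``$\delta\le\eps/4$ keeps $|Q_{i+j}|\le\delta s$'' is off --- the claim's hypothesis already reads $|Q_{i+j}|\le 2\eps s$, and the relevant comparison is between $2\eps s$ and $|U_{i+j}\setminus B_{i+j}|$, which is governed by $\lambda$ rather than $\delta$. Aside from this cosmetic point, your argument and the paper's coincide.
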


\begin{figure}[!htbp]
  \centering
  \includegraphics[width=\textwidth]{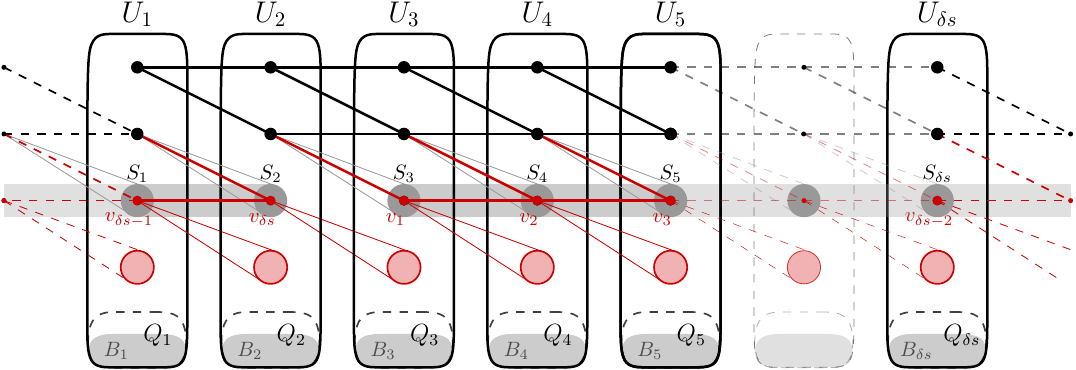}
  \caption{A picture showing the first two rows of the grid already embedded
    (the thick black lines), the candidate sets for the third row (the grey
    blobs $S_3, S_4, \dotsc, S_{\delta s}, S_1, S_2$), and (in red) the path
    $v_1, v_2, \dotsc, v_{\delta s}$ given by
    Claim~\ref{cl:embedding-invariant}, together with the corresponding
    neighbourhoods $N_G(v_j, U_{i+j} \setminus Q_{i+j})$ (the red blobs).}
  \label{fig:embedding}
\end{figure}

Before proving the claim, we show how to complete the embedding of the grid
assuming that it holds. We start by embedding the first row. Let $v_1 \in U_1
\setminus B_1$ be a vertex for which there is $S_2 \subseteq N_G(v_1, U_2
\setminus B_2)$ of order $\alpha\tilde sp/4$ such that $(S_2, U_3 \setminus
B_3)$ is $(\eps, \alpha p)$-lower-regular. As $(U_1 \setminus B_1, U_2 \setminus
B_2)$ is $(2\eps', \alpha p)$-lower-regular, there are at least
$(1-2\eps')(1-\eps)\tilde s$ vertices $v \in U_1 \setminus B_1$ that satisfy
\[
  \deg_G(v, U_2 \setminus B_2) \geq (1-2\eps')|U_2 \setminus B_2|\alpha p \geq
  \alpha\tilde sp/4,
\]
by our choice of constants. Thus, by property \ref{prp:one-sided-inh} almost any
choice of $v_1 \in U_1 \setminus B_1$ will do. Sequentially, for every $i \geq
2$, let $v_i \in S_i$ be a vertex for which there is $S_{i+1} \subseteq N_G(v_i,
U_{i+1} \setminus B_{i+1})$ of order $\alpha\tilde sp/4$ and both $(S_{i+1},
U_{i+2} \setminus B_{i+2})$ and $(S_i, S_{i+1})$ are $(\eps, \alpha
p)$-lower-regular. This is possible as $(S_i, U_{i+1} \setminus B_{i+1})$ is
$(\eps, \alpha p)$-lower-regular and properties \ref{prp:one-sided-inh} and
\ref{prp:two-sided-inh} hold. We continue until we have embedded the first row
of the grid as $v_1,\dotsc,v_{\delta s}$, with $v_i \in U_i$ for every $i \in
[\delta s]$.

Consider now sets $S_2, \dotsc, S_{\delta s}, S_1$ which we previously chose,
where we note that $S_1$ was defined when we embedded $v_{\delta s}$. In
particular, $S_{1+j} \subseteq U_{1+j} \setminus B_{1+j}$ and $(S_{1+j},
S_{2+j})$ and $(S_{1+j}, U_{2+j} \setminus B_{2+j})$ are both $(\eps, \alpha
p)$-regular for every $j \in [\delta s]$. Then, by setting $Q_{1+j} := B_{1+j}
\cup \{v_{1+j}\}$ and invoking Claim~\ref{cl:embedding-invariant} with $i = 2$,
we can embed the second row of the grid as $u_1, \dotsc, u_{\delta s}$, with
$u_j \in S_{1+j}$ for every $j \in [\delta s]$. By the conclusion of
Claim~\ref{cl:embedding-invariant} and a slight abuse of notation, there is a
collection of sets $S_{2+j} \subseteq N_G(u_j, U_{2+j} \setminus Q_{2+j})$ for
every $j \in [\delta s]$, each of order $\alpha\tilde sp/4$, which, by
\ref{prp:one-sided-inh} and \ref{prp:two-sided-inh}, as $u_j \in U_{1+j}
\setminus B_{1+j}$ and $u_{j+1} \in U_{1+j+1} \setminus B_{1+j+1}$, are such
that $(S_{2+j}, S_{2+j+1})$ and $(S_{2+j}, U_{2+j+1} \setminus B_{2+j+1})$ are
$(\eps, \alpha p)$-lower-regular.

The same process can now be repeated for any $i \geq 3$ by setting the sets
$Q_{i+j-1} \subseteq U_{i+j-1}$ for every $j \in [\delta s]$ to be the union of
$B_{i+j-1}$ and the vertices of the grid that were previously embedded into
$U_{i+j-1}$, that is, the images of the vertices $(1,i+j-1), (2,i+j-2), \dotsc,
(i-1,j+1)$. Since $|B_{i+j-1}| \leq \eps\tilde s$, $\delta < \eps\lambda$, and
the lower-regularity conditions hold by \ref{prp:one-sided-inh} and
\ref{prp:two-sided-inh}, we may apply Claim~\ref{cl:embedding-invariant} to
embed the $i$th row. It only remains to prove this claim.

\begin{proof}[Proof of Claim~\ref{cl:embedding-invariant}]
  Without loss of generality, we may assume that all the $Q_{i+j-1}$ are of
  order $2\eps\tilde s$, as we can take arbitrary supersets if this is not the
  case. Let $S_{i+j-1}' \subseteq S_{i+j-1}$ be the set of all $v \in S_{i+j-1}$
  with at least $\alpha\tilde sp/4$ neighbours in $U_{i+j} \setminus Q_{i+j}$.
  On the one hand, as $(S_{i+j-1}, U_{i+j} \setminus B_{i+j})$ is $(\eps,\alpha
  p)$-lower-regular and, thus, there are fewer than $\eps|S_{i+j-1}|$ vertices
  in $S_{i+j-1}$ with degree less than $\alpha\tilde sp/2$ in $U_{i+j} \setminus
  B_{i+j}$, we have
  \[
    e_G(S_{i+j-1} \setminus S_{i+j-1}', Q_{i+j}) \geq \big(|S_{i+j-1} \setminus
    S_{i+j-1}'| - \eps|S_{i+j-1}|\big) \alpha\tilde sp/4.
  \]
  On the other hand, assuming $S_{i+j-1} \setminus S_{i+j-1}'$ is of order at
  least $\alpha\tilde sp/16$ and, hence,
  \[
    |S_{i+j-1} \setminus S_{i+j-1}'||Q_{i+j}|p \geq \alpha\tilde sp/16 \cdot
    2\eps\tilde sp \geq 100s/\lambda^2
  \]
  for $C > 0$ sufficiently large, property \ref{prp:upper-uniform} implies that
  \[
    e_G(S_{i+j-1} \setminus S_{i+j-1}', Q_{i+j}) \leq (1+\lambda) 2\eps\tilde s
    |S_{i+j-1} \setminus S_{i+j-1}'|p.
  \]
  Since $\eps < \alpha/128$, this is a contradiction. Therefore, there are sets
  $S_{i+j-1}' \subseteq S_{i+j-1}$ of order at least $|S_{i+j-1}| - \alpha\tilde
  sp/16$ for each $j \in [\delta s]$ such that every $v \in S_{i+j-1}'$
  satisfies $|N_G(v, U_{i+j} \setminus Q_{i+j})| \geq \alpha\tilde sp/4$.

  We will now find a collection of sets $S_{i+j-1}'' \subseteq S_{i+j-1}'$ of
  order at least $|S_{i+j-1}| - \alpha\tilde sp/8$ such that, for every $2 \leq
  j \leq \delta s$, every $v \in S_{i+j-2}''$ has a non-empty
  $N_G(v,S_{i+j-1}'')$. First, choose $S_{i+\delta s-1}'' \subseteq S_{i+\delta
  s-1}'$ of order $|S_{i+\delta s-1}| - \alpha\tilde sp/8$ arbitrarily, noting
  that such a set exists by the bound on $|S_{i+\delta s-1}'|$. Having chosen
  $S_{i+j-1}''$ for some $2 \leq j \leq \delta s$, we choose $S_{i+j-2}''$ as
  follows. Recall that $(S_{i+j-2},S_{i+j-1})$ is $(\eps,\alpha
  p)$-lower-regular and, thus, by Lemma~\ref{lem:slicing-lemma} and the bounds
  on the orders of $S_{i+j-2}'$ and $S_{i+j-1}''$, $(S_{i+j-2}',S_{i+j-1}'')$ is
  $(2\eps,\alpha p)$-lower-regular. It follows that there are at least
  $(1-2\eps)|S_{i+j-2}'| \geq |S_{i+j-2}| - \alpha\tilde sp/8$ vertices $v \in
  S_{i+j-2}'$ which satisfy
  \[
    \deg_G(v, S_{i+j-1}'') \geq (1-2\eps)|S_{i+j-1}''|\alpha p \geq \alpha^2
    \tilde sp^2/16 > 0.
  \]
  We declare the set of such vertices to be $S_{i+j-2}''$ and continue on to the
  next index $j$.

  Starting with an arbitrary $v_1 \in S_i''$ and sequentially choosing $v_j \in
  N_G(v_{j-1}, S_{i+j-1}'')$ now completes the proof.
\end{proof}

{\small \bibliographystyle{abbrv} \bibliography{references}}

\begin{thebibliography}{10}

\bibitem{beck1983size}
J.~Beck.
\newblock On size {Ramsey} number of paths, trees, and circuits. {I}.
\newblock {\em J. Graph Theory}, 7:115--129, 1983.

\bibitem{beck1990size}
J.~Beck.
\newblock On size {Ramsey} number of paths, trees and circuits. {II}.
\newblock In {\em Mathematics of {Ramsey} theory}, volume~5 of {\em Algorithms
  Combin.}, pages 34--45. Springer, Berlin, 1990.

\bibitem{berger2020size}
S.~Berger, Y.~Kohayakawa, G.~S. Maesaka, T.~Martins, W.~Mendon{\c{c}}a, G.~O.
  Mota, and O.~Parczyk.
\newblock The size-{Ramsey} number of powers of bounded degree trees.
\newblock {\em J. Lond. Math. Soc.}, 103(4):1314--1332, 2021.

\bibitem{chvatalova1975optimal}
J.~Chv{\'a}talov{\'a}.
\newblock Optimal labelling of a product of two paths.
\newblock {\em Discrete Math.}, 11:249--253, 1975.

\bibitem{clemens2019size}
D.~Clemens, M.~Jenssen, Y.~Kohayakawa, N.~Morrison, G.~O. Mota, D.~Reding, and
  B.~Roberts.
\newblock The size-{Ramsey} number of powers of paths.
\newblock {\em J. Graph Theory}, 91(3):290--299, 2019.

\bibitem{clemens2021size}
D.~Clemens, M.~Miralaei, D.~Reding, M.~Schacht, and A.~Taraz.
\newblock On the size-{Ramsey} number of grid graphs.
\newblock {\em Combin. Probab. Comput.}, 30(5):670--685, 2021.

\bibitem{conlon2022size}
D.~Conlon, R.~Nenadov, and M.~Truji\'{c}.
\newblock The size-{Ramsey} number of cubic graphs.
\newblock {\em Bull. Lond. Math. Soc.}, 54(6):2135--2150, 2022.

\bibitem{draganic2022rolling}
N.~Dragani{\'c}, M.~Krivelevich, and R.~Nenadov.
\newblock Rolling backwards can move you forward: on embedding problems in
  sparse expanders.
\newblock {\em Trans. Amer. Math. Soc.}, 375(7):5195--5216, 2022.

\bibitem{erdHos1978size}
P.~Erd{\H{o}}s, R.~J. Faudree, C.~C. Rousseau, and R.~H. Schelp.
\newblock The size {Ramsey} number.
\newblock {\em Period. Math. Hungar.}, 9(1--2):145--161, 1977.

\bibitem{friedman1987expanding}
J.~Friedman and N.~Pippenger.
\newblock Expanding graphs contain all small trees.
\newblock {\em Combinatorica}, 7:71--76, 1987.

\bibitem{gerke2007small}
S.~Gerke, Y.~Kohayakawa, V.~R{\"o}dl, and A.~Steger.
\newblock Small subsets inherit sparse {{$\varepsilon$}}-regularity.
\newblock {\em J. Combin. Theory Ser. B}, 97(1):34--56, 2007.

\bibitem{gerke2005sparse}
S.~Gerke and A.~Steger.
\newblock The sparse regularity lemma and its applications.
\newblock In {\em Surveys in combinatorics 2005}, London Math. Soc. Lecture
  Note Ser., pages 227--258. Cambridge University Press, Cambridge, 2005.

\bibitem{han2020multicolour}
J.~Han, M.~Jenssen, Y.~Kohayakawa, G.~O. Mota, and B.~Roberts.
\newblock The multicolour size-{Ramsey} number of powers of paths.
\newblock {\em J. Combin. Theory Ser. B}, 145:359--375, 2020.

\bibitem{haxell1995induced}
P.~E. Haxell, Y.~Kohayakawa, and T.~{\L}uczak.
\newblock The induced size-{Ramsey} number of cycles.
\newblock {\em Combin. Probab. Comput.}, 4(3):217--239, 1995.

\bibitem{kamcev2021size}
N.~Kam{\v{c}}ev, A.~Liebenau, D.~R. Wood, and L.~Yepremyan.
\newblock The size {Ramsey} number of graphs with bounded treewidth.
\newblock {\em SIAM J. Discrete Math.}, 35(1):281--293, 2021.

\bibitem{kohayakawa2018anti}
Y.~Kohayakawa, P.~B. Konstadinidis, and G.~O. Mota.
\newblock On an anti-{Ramsey} threshold for sparse graphs with one triangle.
\newblock {\em J. Graph Theory}, 87(2):176--187, 2018.

\bibitem{kohayakawa2011sparse}
Y.~Kohayakawa, V.~R{\"o}dl, M.~Schacht, and E.~Szemer{\'e}di.
\newblock Sparse partition universal graphs for graphs of bounded degree.
\newblock {\em Adv. Math.}, 226(6):5041--5065, 2011.

\bibitem{letzter2021size}
S.~Letzter, A.~Pokrovskiy, and L.~Yepremyan.
\newblock Size-{Ramsey} numbers of powers of hypergraph trees and long
  subdivisions.
\newblock {\em arXiv preprint arXiv:2103.01942}, 2021.

\bibitem{rodl2000size}
V.~R{\"o}dl and E.~Szemer{\'e}di.
\newblock On size {Ramsey} numbers of graphs with bounded degree.
\newblock {\em Combinatorica}, 20(2):257--262, 2000.

\bibitem{vskoric2018local}
N.~{\v{S}}kori{\'c}, A.~Steger, and M.~Truji{\'c}.
\newblock Local resilience of an almost spanning {{$k$}}-cycle in random
  graphs.
\newblock {\em Random Structures Algorithms}, 53(4):728--751, 2018.

\end{thebibliography}

\end{document}